\numberwithin{equation}{section}
\theoremstyle{plain}
\newtheorem{theorem}{Theorem}[section]
\newtheorem{proposition}[theorem]{Proposition}
\newtheorem{corollary}[theorem]{Corollary}
\theoremstyle{remark}
\theoremstyle{definition}
\newtheorem{definition}[theorem]{Definition}
\newsavebox{\proofbox}
\savebox{\proofbox}{\begin{picture}(7,7)  \put(0,0){\framebox(7,7){}}\end{picture}}
\let\parsign\S
\renewcommand\H{\mathcal{H}}
\newcommand\M{\mathcal{M}}
\newcommand\NN{\mathcal{N}}
\DeclareMathOperator{\supp}{supp}
\DeclareMathOperator{\SL}{SL}
\DeclareMathOperator{\SO}{SO}
\newcommand\N{\mathbb{N}}
\newcommand\R{\mathbb{R}}
\newcommand{\dd}{\mathop{}\!\mathrm{d}}
\DeclarePairedDelimiter\br{(}{)}
\DeclarePairedDelimiter\abs{\lvert}{\rvert}
\DeclarePairedDelimiter\norm{\lVert}{\rVert}
\providecommand\for{}
\newcommand\SetSymbol[1][]{%
\nonscript\:#1\vert
\allowbreak
\nonscript\:
\mathopen{}}
\DeclarePairedDelimiterX\set[1]{\lbrace}{\rbrace}{%
\renewcommand\for{\SetSymbol[\delimsize]}
#1
}
\newcommand{\ii}{\mathrm{i}}
\newcommand{\euler}{\mathrm{e}}
\renewcommand{\epsilon}{\varepsilon}
\renewcommand{\phi}{\varphi}
\title[Random walks on moduli space]{On convergence of random walks on moduli space}
\author{Roland Prohaska}
\address{Departement Mathematik, ETH Z\"{u}rich, R\"{a}mistrasse 101, 8092 Z\"{u}rich, Switzerland}
\email{roland.prohaska@math.ethz.ch}
\subjclass[2010]{Primary 60B15; Secondary 32G15, 60G50, 22F10}
\keywords{Random walk, moduli space, spectral gap, equidistribution}
\date{\usdate\today}
\begin{document}

\begin{abstract}
The purpose of this note is to establish convergence of random walks on the moduli space of Abelian differentials on compact Riemann surfaces in two different modes: 
convergence of the $n$-step distributions from almost every starting point in an affine invariant submanifold towards the associated affine invariant measure, and almost sure pathwise equidistribution towards the affine invariant measure on the $\SL_2(\R)$-orbit closure of an arbitrary starting point. 
These are analogues to previous results for random walks on homogeneous spaces. 
\end{abstract}
\maketitle
\section{Introduction}\label{sec:intro}
Consider the moduli space of unit-area Abelian differentials on compact Riemann surfaces, that is the space of pairs $(M,\omega)$, where $M$ is a compact Riemann surface and $\omega$ a holomorphic 1-form on $M$ with $\operatorname{vol}(M,\omega)=\frac{\ii}{2}\int_M\omega\wedge\bar{\omega}=1$, up to biholomorphic equivalence. 
The form $\omega$ determines a flat metric on $M$ with conical singularities at its zeros. 
Hence, a pair $(M,\omega)$ can alternatively be seen as a \emph{translation surface}. 
This viewpoint can be used to define a natural $\SL_2(\R)$-action on the moduli space. 
The moduli space is stratified by specification of combinatorial data: 
the genus $g$ of the surface $M$ and a partition $\alpha=(\alpha_1,\dots,\alpha_n)$ of $2g-2$ giving the multiplicity of the zeros of $\omega$. 
Strata are not always connected but consist of at most three connected components, which have been classified \cite{conn_comp}. 
The $\SL_2(\R)$-action preserves strata and their connected components. 
We refer to the survey \cite{flat_survey} for further background. 
In the following, we restrict our attention to a connected component of a stratum, which we shall denote by $\H$ throughout the article. 

Inside $\H$ there are natural lower dimensional structures, called \emph{affine invariant submanifolds}, which are immersed submanifolds that locally look like complex subspaces in period coordinates; see \cite[Definition~1.2]{EMM}. 
Every affine invariant submanifold $\M$ carries a unique ergodic $\SL_2(\R)$-invariant probability measure $\nu_\M$. 
A particular case is $\H$ itself together with the normalized Masur--Veech measure. 
The following result will serve as our motivating example. 
\begin{theorem}[{Eskin--Mirzakhani--Mohammadi \cite{EMM}}]\label{thm:EMM}
Let $\mu$ be an absolutely continuous compactly supported $\SO_2(\R)$-bi-invariant probability measure on $\SL_2(\R)$ and $x\in\H$. 
Then the orbit closure $\overline{\SL_2(\R)x}$ is an affine invariant submanifold $\M$, and we have the weak* convergence 
\begin{align}\label{cesaro}
\frac1n\sum_{k=0}^{n-1}\mu^{*k}*\delta_x\longrightarrow \nu_\M
\end{align}
as $n\to\infty$.
\end{theorem}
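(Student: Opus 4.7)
The plan is to show that the Cesaro averages $\nu_n := \frac1n\sum_{k=0}^{n-1}\mu^{*k}*\delta_x$ converge weak* to $\nu_\M$ by identifying their unique subsequential weak* limit. A preparatory tightness step is necessary so that limits are genuine probability measures: quantitative non-divergence estimates for the $\SL_2(\R)$-action on strata (after Eskin--Masur and Athreya), combined with the compact support of $\mu$, provide compact sets $K_\epsilon\subset\H$ with $\nu_n(K_\epsilon)\ge 1-\epsilon$ uniformly in $n$. Any weak* subsequential limit $\nu$ is then a probability measure, and by the telescoping identity $\mu*\nu_n-\nu_n=\frac1n(\mu^{*n}*\delta_x-\delta_x)\to 0$ it is $\mu$-stationary.

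Next I would upgrade $\mu$-stationarity to $\SL_2(\R)$-invariance using the bi-$\SO_2(\R)$-invariance and absolute continuity of $\mu$. Left $\SO_2(\R)$-invariance of $\mu$ forces $\SO_2(\R)$-invariance of $\nu$ directly from the stationarity equation; and from $\SO_2(\R)$-invariance, the $KAK$ decomposition of $\SL_2(\R)$, and the absolute continuity of $\mu$, a standard averaging argument promotes this to full $\SL_2(\R)$-invariance. At this point the Eskin--Mirzakhani measure classification theorem applies, and the ergodic decomposition of $\nu$ expresses it as a convex combination of affine invariant measures $\nu_{\M'}$. Since each $\nu_n$ is supported on the orbit closure $\overline{\SL_2(\R)x}$, which by the orbit closure classification is an affine invariant submanifold $\M$, every piece $\nu_{\M'}$ of the decomposition satisfies $\M'\subseteq\M$.

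The remaining and hardest task is to rule out that $\nu$ assigns positive mass to $\nu_{\M'}$ for some proper $\M'\subsetneq\M$. For this I would invoke the Eskin--Mirzakhani--Mohammadi isolation and linearization results, which quantitatively control the random walk in a neighborhood of each of the countably many proper affine invariant sub-submanifolds of $\M$ and preclude asymptotic concentration on any of them. This is the technical heart of the \emph{magic wand} theorem, proved by an intricate induction on the complexity of $\M$ that simultaneously yields the orbit closure classification itself. By contrast, the earlier steps --- tightness, the stationary-to-invariant upgrade, and invocation of measure classification --- are comparatively standard in contemporary affine and homogeneous dynamics, so it is the isolation step that constitutes the main obstacle.
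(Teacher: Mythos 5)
First, a mismatch of targets: the paper never proves Theorem~\ref{thm:EMM} --- it is quoted from Eskin--Mirzakhani--Mohammadi \cite{EMM} purely as motivation, so there is no internal proof to compare against. Measured against the actual argument in \cite{EMM} (whose scheme this paper replays in \parsign\ref{sec:pathwise} for the pathwise statement: limits of averages are stationary, ergodic stationary measures are affine by Eskin--Mirzakhani \cite{EMi}, and the Lyapunov/Margulis functions of \cite[Proposition~2.13]{EMM} together with countability of affine invariant submanifolds rule out escape of mass and concentration on proper submanifolds), your outline has the right architecture: tightness via nondivergence, the telescoping identity giving stationarity of subsequential limits, measure classification, and the isolation step as the technical heart --- \enquote{isolation} being precisely the terminology of \cite{EMM}. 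Two presentational points: in \cite{EMM} one takes $\M$ to be the \emph{smallest} affine invariant submanifold containing $x$ (which exists by countability and stability under intersection, cf.\ \cite[Proposition~2.16]{EMM}) and \emph{deduces} $\overline{\SL_2(\R)x}=\M$ from the equidistribution, rather than invoking the orbit closure classification to constrain the supports as you do, which is mildly circular; and the avoidance of proper $\M'\subsetneq\M$ is accomplished by the Margulis function construction rather than an induction on complexity.

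The genuinely incorrect step is your claimed \enquote{standard averaging argument} upgrading an $\SO_2(\R)$-invariant $\mu$-stationary measure to an $\SL_2(\R)$-invariant one. No such soft argument exists: $K$-invariant $\mu$-stationary (harmonic) measures need not be invariant --- the rotation-invariant measure on the circle $\SL_2(\R)/P$ is $\mu$-stationary for every $\SO_2(\R)$-bi-invariant $\mu$ yet is not $\SL_2(\R)$-invariant, and neither the $KAK$ decomposition nor absolute continuity of $\mu$ changes this. In the moduli space setting, the passage from stationarity to invariance is exactly the hard content of Eskin--Mirzakhani's measure rigidity: by Furstenberg's correspondence (as recalled before Theorem~\ref{thm:stationary}), $\mu$-stationary measures correspond to $P$-invariant measures, and \cite{EMi} proves that ergodic $P$-invariant measures are $\SL_2(\R)$-invariant and affine. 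Since you invoke the Eskin--Mirzakhani classification immediately afterwards anyway, the defect is one of logical order rather than of overall strategy --- but as written the intermediate step would fail, and the correct route is to apply the classification directly to the stationary limit measure instead of first manufacturing invariance.
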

Here $\mu^{*k}$ denotes the $k$-fold convolution power of $\mu$, and
weak* convergence of measures means convergence when the measures in question are applied to continuous test functions with compact support. 
Spelled out explicitly, the weak* convergence in the conclusion of the theorem above thus means that for every compactly supported continuous function $f\in C_c(\H)$ it holds that 
\begin{align*}
\lim_{n\to\infty}\frac1n\sum_{k=0}^{n-1}\int f(g_k\dotsm g_1x)\dd\mu^{\otimes k}(g_1,\dots,g_k)=\int f\dd\nu_\M.
\end{align*}
This result should be interpreted as a statement about Ces\`aro convergence in law of the random walk on $\H$ given by $\mu$. 
Indeed, the convolution $\mu^{*n}*\delta_x$ is the distribution of the location after $n$ steps of the random walk started at $x$. 
The purpose of this short article is to establish two further modes of convergence for such random walks:
\begin{enumerate}
\item \emph{Generic points for non-averaged convergence:} In \parsign\ref{sec:spectral_gap}, we prove that the stronger, non-averaged, weak* convergence
\begin{align*}
\mu^{*n}*\delta_x\longrightarrow \nu_\M
\end{align*}
as $n\to\infty$ holds for $\nu_\M$-almost every starting point $x$. 
As in the homogeneous setting (see \cite[\parsign 3]{aspects}), the key ingredient is the existence of a spectral gap in $L_0^2(\nu_\M)$ of the convolution operator 
\begin{align*}
\pi(\mu)\colon f\mapsto \biggl(x\mapsto \int f(gx)\dd\mu(g)\biggr)
\end{align*}
acting on measurable functions, which is a consequence of the work of Avila--Gou{\"e}zel~\cite{spectral_gap_SL2}. 
\item \emph{Pathwise equidistribution:} In \parsign\ref{sec:pathwise}, we improve the convergence in law in \eqref{cesaro} to almost sure pathwise convergence, meaning that for $\mu^{\otimes\N}$-almost every sequence $(g_i)_i$ of elements of $\SL_2(\R)$ we have 
\begin{align*}
\frac1n\sum_{k=0}^{n-1}\delta_{g_k\dotsm g_1x}\longrightarrow \nu_\M
\end{align*}
as $n\to\infty$ in the weak* topology. 
The argument uses techniques developed by Benoist--Quint for the homogeneous case \cite{BQ3} and the Lyapunov functions constructed in \cite[Proposition~2.13]{EMM}. 
\end{enumerate}

The removal of ergodic averages from convergence results is a recurring theme in current research on dynamical systems. 
In the context of Teichm\"uller dynamics, this idea features for example in the work of Nevo--R\"uhr--Weiss~\cite{eff-counting} as an ingredient to their proof of an effective counting estimate for saddle connection holonomy vectors, and Forni~\cite{forni} proved that the ergodic average can be removed for limits of geodesic push-forwards of horocycle-invariant measures outside a set of times of upper density zero. 

Regarding point~(ii) above, after completion of the first version of the present article the author was made aware that the problem of pathwise equidistribution of random walks on moduli space has already been studied by Chaika--Eskin~\cite[\S2]{chaika-eskin}. 
In~\cite[Theorem~2.1]{chaika-eskin}, they obtain the same conclusion using essentially the same techniques. 
The additional contribution to this problem in the present paper consists in a slight generalization of the class of measures $\mu$ to which the result applies; see the brief discussion after the statement of Theorem~\ref{thm:pathwise}. 
\subsection*{Acknowledgments}
The author would like to thank Jayadev Athreya for valuable discussions, his encouragement to write this article, and providing numerous useful references. 
Thanks are also due to Alex Eskin for pointing out further related results and the overlap with~\cite{chaika-eskin}. 
\section{Generic Points}\label{sec:spectral_gap}
Let $G$ be a locally compact $\sigma$-compact metrizable group and $X$ a locally compact $\sigma$-compact metrizable space on which $G$ acts continuously. 
Then for any probability measure $\mu$ on $G$, one can define the convolution operator $\pi(\mu)$ by
\begin{align*}
\pi(\mu)f(x)=\int f(gx)\dd\mu(g)
\end{align*}
for bounded measurable functions $f$ on $X$ and $x\in X$. 
If $m_X$ is a $G$-invariant probability measure on $X$, this gives a bounded linear operator $\pi(\mu)\colon L^\infty(m_X)\to L^\infty(m_X)$ which extends to a continuous contraction on each $L^p$-space (see \cite[Corollary~2.2]{BQ_book}). 

We will be interested in the existence of an $L^2$-spectral gap of this convolution operator in the case where $G=\SL_2(\R)$, $X=\M$ is an affine invariant submanifold of $\H$ endowed with the ergodic $\SL_2(\R)$-invariant probability measure $\nu_\M$, and $\mu$ is a probability measure on $\SL_2(\R)$. 
\begin{definition}
We say that $\mu$ has an \emph{$L^2$-spectral gap} on $X$ if the associated convolution operator $\pi(\mu)$ restricted to the space $L_0^2(X,m_X)$ of square-integrable functions with mean $0$ has spectral radius strictly less than $1$. 
\end{definition}
We note that by the spectral radius formula, $\mu$ having an $L^2$-spectral gap on $X$ can be reformulated as the requirement that 
\begin{align*}
\lim_{n\to\infty}\sqrt[n]{\norm{\pi(\mu)|^n_{L_0^2}}_{\mathrm{op}}}<1.
\end{align*}
\begin{proposition}\label{prop:spectral_gap}
Suppose that the probability measure $\mu$ on $\SL_2(\R)$ is not supported on a closed amenable subgroup and let $\M$ be an affine invariant submanifold of $\H$. 
Then $\mu$ has a $L^2$-spectral gap on $\M$. 
\end{proposition}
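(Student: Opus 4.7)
The plan is to deduce the $L^2$-spectral gap for $\mu$ by combining the representation-theoretic spectral gap established by Avila--Gou\"ezel \cite{spectral_gap_SL2} with the classical characterization of the operator norm of the regular convolution operator on $\SL_2(\R)$.

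The first step is to invoke \cite{spectral_gap_SL2}: the unitary representation $\pi$ of $\SL_2(\R)$ on $L_0^2(\M, \nu_\M)$ is \emph{tempered}, in the sense that it is weakly contained in the left regular representation $\lambda_{\SL_2(\R)}$ of $\SL_2(\R)$. Fell's theorem on weak containment then yields the operator-norm comparison
\begin{equation*}
\norm{\pi(\mu)|_{L_0^2}}_{\mathrm{op}} \leq \norm{\lambda_{\SL_2(\R)}(\mu)}_{\mathrm{op}}
\end{equation*}
for any probability measure $\mu$ on $\SL_2(\R)$.

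The second step is to bound $\norm{\lambda_{\SL_2(\R)}(\mu)}_{\mathrm{op}} < 1$. Here I would appeal to the classical theorem---going back to Kesten in the discrete case and to Derriennic--Guivarc'h in the locally compact case---stating that for a probability measure $\mu$ on a locally compact group $G$, the norm $\norm{\lambda_G(\mu)}_{\mathrm{op}}$ equals $1$ if and only if the closed subgroup generated by $\supp(\mu)$ is amenable. Under the hypothesis that $\supp(\mu)$ is not contained in any closed amenable subgroup of $\SL_2(\R)$, this generated subgroup is non-amenable, and the strict inequality follows. Chaining the two bounds gives $\norm{\pi(\mu)|_{L_0^2}}_{\mathrm{op}} < 1$, which is stronger than the spectral radius bound demanded by the definition.

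The main obstacle is the deep input from \cite{spectral_gap_SL2}, used here as a black box; everything else is a routine wrapping. The only subtle point to verify is that the hypothesis on $\mu$ is genuinely equivalent to the non-amenability of the closed subgroup it generates, which is clear since amenability is preserved by taking closed subgroups and by closures, so $\supp(\mu)$ avoiding every closed amenable subgroup forces the closed subgroup it generates to be non-amenable.
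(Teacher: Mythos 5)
There are two genuine gaps, one in each step of your argument, and the second one actually makes your claimed conclusion false as stated. First, temperedness of $\pi$ on $L_0^2(\nu_\M)$ is not what Avila--Gou\"ezel prove, and it is strictly stronger than what their theorem gives. The Main Theorem of \cite{spectral_gap_SL2} asserts that $0$ is not an accumulation point of the spectrum of the foliated Laplacian on $\M$; this is compatible with finitely many eigenvalues in $(0,1/4)$, i.e.\ with complementary series representations $\mathcal{C}^u$ (with $u$ bounded away from $1$) occurring in the direct integral decomposition of $\pi$. Complementary series are precisely the non-tempered part of the unitary dual of $\SL_2(\R)$, so whenever such small eigenvalues occur, $\pi$ is \emph{not} weakly contained in $\lambda_{\SL_2(\R)}$ and your first inequality has no source. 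What the black box actually yields is only that $\pi$ is isolated from the trivial representation in the Fell topology, $\mathds{1}\not\prec\pi$, and this is exactly how the paper uses it: it verifies $\mathds{1}\not\prec\pi$ via the Casimir operator and the Laplacian spectral gap, and then applies Shalom's theorem (\cite[Theorem~C]{spectral_gap}), which is engineered to need only isolation from the trivial representation, not temperedness.

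Second, the Kesten-type statement you quote is false for non-symmetric measures at the level of the operator norm: the correct characterization is that $\norm{\lambda_G(\mu)}_{\mathrm{op}}=1$ if and only if $\supp(\mu)$ is contained in a \emph{coset} $gH$ of a closed amenable subgroup $H$; only the spectral radius detects amenability of the closed subgroup generated by $\supp(\mu)$ itself. A counterexample squarely within the setting of this paper: take $\mu=\delta_{a_t}*m_K$ with $K=\SO_2(\R)$, $m_K$ its Haar probability measure, $a_t$ the diagonal matrix with entries $\euler^t,\euler^{-t}$, and $t\neq 0$. Its support $a_tK$ generates all of $\SL_2(\R)$, so $\mu$ satisfies the hypothesis of the proposition, yet with $\check{\mu}$ denoting the push-forward of $\mu$ under $g\mapsto g^{-1}$ one has $\check{\mu}*\mu=m_K$, whence $\norm{\lambda(\mu)}_{\mathrm{op}}^2=\norm{\lambda(m_K)}_{\mathrm{op}}=1$; likewise $\pi(\mu)=\pi(m_K)\circ\pi(\delta_{a_t})$ is an orthogonal projection composed with a unitary, so $\norm{\pi(\mu)|_{L_0^2}}_{\mathrm{op}}=1$ as soon as $L_0^2(\nu_\M)$ contains a nonzero $K$-invariant function, which it does. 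Thus your final claim of a strict \emph{norm} bound, advertised as stronger than the spectral radius bound, is false under the stated hypotheses; only the spectral radius can be (and is) bounded away from $1$. This second gap is repairable by invoking the spectral-radius version of the Derriennic--Guivarc'h/Berg--Christensen theorem together with $\rho(\pi(\mu))\le\rho(\lambda(\mu))$, but the first gap is fatal to the route via temperedness; the paper's detour through the $\SL_2(\R)$ unitary dual and Shalom's spectral-radius theorem avoids both problems simultaneously.
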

Underlying this proposition is seminal work on small eigenvalues of the foliated hyperbolic Laplacian on $\M$, due to Avila--Gou\"{e}zel--Yoccoz~\cite{AGY} in the case of strata and to Avila--Gou\"{e}zel~\cite{spectral_gap_SL2} in the general case. 
The proof of Proposition~\ref{prop:spectral_gap} mainly consists of a translation of these results into the language of representation theory, a connection that was already noted in~\cite[Appendix~B]{AGY}. 
For the reader's convenience, we briefly review some of the relevant concepts. 

First, recall that the $\SL_2(\R)$-action on $\M$ induces a unitary representation $\pi$ of $\SL_2(\R)$ on $L_0^2(\nu_\M)$ defined by $\pi_gf(x)=f(g^{-1}x)$ for $g\in\SL_2(\R)$, $f\in L_0^2(\nu_\M)$ and $x\in \M$. 
By ergodicity of $\nu_\M$, there are no nonzero $\SL_2(\R)$-invariant functions in $L_0^2(\nu_\M)$. 
As $\SL_2(\R)$ does not have property (T), there might however exist  functions that are \enquote{almost invariant}, which would represent an obstruction to the conclusion of the proposition. 
To introduce the latter concept, let $\pi$ be an arbitrary unitary representation of a locally compact $\sigma$-compact metrizable group $G$ on a separable Hilbert space $\H$. 
Then $\pi$ is said to have \emph{almost invariant vectors} if for every $\epsilon>0$ and every compact subset $Q\subset G$ there exists a unit vector $v\in \H$ such that $\norm{\pi_gv-v}<\epsilon$ for every $g\in Q$. 
Equivalently, the trivial representation $\mathds{1}$ of $G$ is \emph{weakly contained} in $\pi$, written $\mathds{1}\prec \pi$. 
On the set of equivalence classes of separable unitary representations of $G$ there is natural topology, called the \emph{Fell topology}. 
This topology is in general not Hausdorff and has the property that weak containment $\pi_1\prec \pi_2$ of two separable unitary representations $\pi_1,\pi_2$ of $G$ is equivalent to $\pi_1\in\overline{\set{\pi_2}}$. 
Thus, $\pi$ not having almost invariant vectors can be equivalently characterized as $\pi$ being isolated from the trivial representation in the Fell topology, in the sense that $\mathds{1}\notin\overline{\set{\pi}}$. 
We refer to \cite[Appendix~F]{BHV} and the references therein for further background on these notions. 
\begin{proof}[Proof of Proposition~\textup{\ref{prop:spectral_gap}}]
The convolution operator $\pi(\mu)$ associated to $\mu$ can be expressed as $\pi(\mu)f = \int\pi_{g^{-1}}f\dd\mu(g)$, where $\pi$ denotes the unitary representation of $\SL_2(\R)$ on $L_0^2(\nu_\M)$ described above and the integral may either be understood pointwise or as weak integral. 
In view of a theorem of Shalom (\cite[Theorem~C]{spectral_gap} applied to the push-forward of $\mu$ by the inverse map $g\mapsto g^{-1}$), it is enough to show that the trivial representation is not weakly contained in $\pi$, or in other words, that $\pi$ is isolated from the trivial representation in the Fell topology. 

To this end, we need to draw on the representation theory of $\SL_2(\R)$, the relevant parts of which are summarized in \cite[\S3.4]{spectral_gap_SL2}. 
The crucial fact is that the only non-trivial irreducible unitary representations of $\SL_2(\R)$ converging to the trivial representation are complementary series representations $\mathcal{C}^u$ with parameter $u\in(0,1)$ approaching~$1$. 
Recalling that the trivial representation is not contained in $\pi$ by ergodicity and using properties of the Fell topology, it follows that $\pi$ is isolated from the trivial representation if and only if there is an upper bound $\overline{u}<1$ such that the representations $\mathcal{C}^u$ for $u>\overline{u}$ do not feature with positive weight in the direct integral decomposition of $\pi$ into irreducibles. 
Whether the latter holds can be understood by considering the spectrum on $L_0^2(\nu_\M)$ of the Casimir operator $\Omega$ of $\SL_2(\R)$, which is a differential operator generating the center of the universal enveloping algebra of $\SL_2(\R)$. 
Being central, $\Omega$ acts as scalar in irreducible unitary representations of $\SL_2(\R)$, and this scalar equals $(1-u^2)/4$ for the complementary series representation $\mathcal{C}^u$. 
The desired property of $\mathcal{C}^u$ not featuring in the integral decomposition of $\pi$ for values of $u$ arbitrarily close to $1$ is thus equivalent to $0$ not being an accumulation point of the spectrum $\sigma(\Omega)$ of $\Omega$ on $L_0^2(\nu_\M)$. 
As explained at the end of~\cite[\S3.4]{spectral_gap_SL2}, the Casimir operator can be interpreted as foliated Laplacian $\Delta$ on $\M$, which implies that $\sigma(\Omega)\cap(0,1/4)=\sigma(\Delta)\cap(0,1/4)$. 
However, as a consequence of the Main Theorem of~\cite{spectral_gap_SL2}, zero is not an accumulation point of the spectrum of $\Delta$ on $\M$. 
This entails the same property for $\Omega$ on $L_0^2(\nu_\M)$, hence the desired statement about complementary series representations featuring in $\pi$.
This finally proves that $\pi$ is indeed isolated from the trivial representation and finishes the proof. 
\end{proof}
We are now ready to state and prove the following quantitative result on generic points for random walk convergence. 
\begin{theorem}\label{thm:ae_convergence}
Let $\H$ be a connected component of a stratum of the moduli space of unit-area Abelian differentials on compact Riemann surfaces and $\M$ an affine invariant submanifold carrying the ergodic $\SL_2(\R)$-invariant measure $\nu_\M$. 
Let $\mu$ be a probability measure on $\SL_2(\R)$ that is not supported on a closed amenable subgroup. 
Then for $\nu_\M$-almost every $x\in \M$ we have 
\begin{align}\label{qualitative}
\mu^{*n}*\delta_x\longrightarrow \nu_\M
\end{align}
as $n\to\infty$ in the weak* topology. 
This convergence is exponentially fast in the sense that for every fixed $f\in L^2(\nu_\M)$ we have 
\begin{align}\label{exponential}
\limsup_{n\to\infty}\abs*{\int f\dd(\mu^{*n}*\delta_x)-\int f\dd \nu_\M}^{1/n}\le \rho\bigl(\pi(\mu)|_{L_0^2}\bigr)^{1/2}
\end{align}
for $\nu_\M$-a.e.\ $x\in \M$, where $\rho\bigl(\pi(\mu)|_{L_0^2}\bigr)$ denotes the spectral radius of $\pi(\mu)$ restricted to $L_0^2(\nu_\M)$. More precisely, given $\rho\bigl(\pi(\mu)|_{L_0^2}\bigr)<\alpha<1$, choose $N\in\N$ such that $\norm{\pi(\mu)|^n_{L_0^2}}_{\mathrm{op}}\le \alpha^n$ for all $n\ge N$. 
Then if we denote $f_0=f-\int f\dd\nu_\M$ and
\begin{align*}
B_{\alpha,n,f}=\set*{x\in \M\for \abs[\Big]{\pi(\mu)^{n'}f(x)-\int f\dd \nu_\M}\ge \alpha^{n'/2}\norm{f_0}_{L^2}\text{ for some }n'\ge n},
\end{align*}
we have the bound
\begin{align}\label{quantitative}
\nu_\M\br*{B_{\alpha,n,f}}\le \frac{\alpha^n}{1-\alpha}
\end{align}
for every $n\ge N$. 
\end{theorem}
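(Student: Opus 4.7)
The plan is to exploit the $L^2$-spectral gap from Proposition~\ref{prop:spectral_gap} via Chebyshev's inequality and Borel--Cantelli, and then upgrade the resulting pointwise almost-sure rate to the qualitative weak* statement via separability of $C_c(\M)$. The starting identity is $\int f\dd(\mu^{*n}*\delta_x) = \pi(\mu)^n f(x)$, obtained from $\pi(\mu)^n = \pi(\mu^{*n})$; since $\nu_\M$ is $\SL_2(\R)$-invariant this equals $\pi(\mu)^n f_0(x) + \int f\dd\nu_\M$ with $f_0 = f - \int f\dd\nu_\M \in L_0^2(\nu_\M)$, so all the work reduces to controlling $\pi(\mu)^n f_0$ in $L^2(\nu_\M)$.

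For the quantitative estimate \eqref{quantitative}, fix $\alpha > \rho(\pi(\mu)|_{L_0^2})$ and $N$ with $\norm{\pi(\mu)|^n_{L_0^2}}_{\mathrm{op}} \le \alpha^n$ for $n \ge N$. Then for each $n' \ge N$, Chebyshev's inequality applied to $\pi(\mu)^{n'} f_0 \in L_0^2(\nu_\M)$ gives
\begin{align*}
\nu_\M\bigl(\bigl\{x \in \M : \abs{\pi(\mu)^{n'} f_0(x)} \ge \alpha^{n'/2}\norm{f_0}_{L^2}\bigr\}\bigr) \le \alpha^{-n'}\frac{\norm{\pi(\mu)^{n'} f_0}_{L^2}^2}{\norm{f_0}_{L^2}^2} \le \alpha^{n'},
\end{align*}
and a union bound over $n' \ge n$ yields $\nu_\M(B_{\alpha, n, f}) \le \sum_{n' \ge n}\alpha^{n'} = \alpha^n/(1-\alpha)$, which is \eqref{quantitative}. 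Since this sequence is summable in $n$, Borel--Cantelli shows that $\bigcap_n B_{\alpha, n, f}$ is $\nu_\M$-null; off that set we get $\limsup_{n\to\infty}\abs{\pi(\mu)^n f(x) - \int f\dd\nu_\M}^{1/n} \le \alpha^{1/2}$. Taking a countable sequence $\alpha_m \searrow \rho(\pi(\mu)|_{L_0^2})$ and intersecting the corresponding null sets produces \eqref{exponential} for this fixed $f \in L^2(\nu_\M)$.

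Finally, to obtain \eqref{qualitative} with one $f$-independent null set, I would fix a countable family $\{f_j\} \subset C_c(\M)$ with the property that every $f \in C_c(\M)$ is a uniform limit of some subsequence $f_{j_k}$ whose supports lie in a single compact set; such a family exists because $\M$ is locally compact $\sigma$-compact metrizable. Applying \eqref{exponential} to each $f_j$ and taking the union of the corresponding null sets yields a single $\nu_\M$-null set outside of which $\int f_j\dd(\mu^{*n}*\delta_x) \to \int f_j\dd\nu_\M$ for every $j$. Since each $\mu^{*n}*\delta_x$ is a probability measure, the crude bound $\abs{\int(f - f_{j_k})\dd(\mu^{*n}*\delta_x)} \le \norm{f - f_{j_k}}_{\infty}$ (and the same for $\nu_\M$) propagates convergence to arbitrary $f \in C_c(\M)$, establishing \eqref{qualitative}. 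The only step requiring care is this final separability argument, since a priori the null set in \eqref{exponential} depends on $f$; everything else is a routine consequence of the operator norm bound supplied by Proposition~\ref{prop:spectral_gap}.
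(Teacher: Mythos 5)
Your proposal is correct and follows essentially the same route as the paper: the identity $\int f\dd(\mu^{*n}*\delta_x)=\pi(\mu)^nf(x)$, the operator norm bound from Proposition~\ref{prop:spectral_gap} combined with Chebyshev's inequality and a union bound over $n'\ge n$ for \eqref{quantitative}, Borel--Cantelli with a countable sequence $\alpha_m\searrow\rho\bigl(\pi(\mu)|_{L_0^2}\bigr)$ for \eqref{exponential}, and separability of the space of compactly supported continuous test functions for \eqref{qualitative}. The only difference is presentational: you spell out the countable-family approximation argument that the paper compresses into the phrase \enquote{in view of separability of $C_c(\H)$}.
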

To make sense of this statement, recall that $\rho\bigl(\pi(\mu)|_{L_0^2}\bigr)$ is guaranteed to be strictly less than $1$ by Proposition~\ref{prop:spectral_gap}. 
Examples of measures to which the theorem applies are Zariski dense measures (i.e.\ measures whose support generates a Zariski dense subgroup of $\SL_2(\R)$) and also the measures appearing in Theorem~\ref{thm:EMM}. 
\begin{proof}
In view of separability of $C_c(\H)$, the weak* convergence \eqref{qualitative} will follow if we can prove that for a fixed function $f\in C_c(\H)$ we have
\begin{align*}
\pi(\mu^{*n})f=\pi(\mu)^nf\longrightarrow\int f\dd \nu_\M
\end{align*}
$\nu_\M$-a.e.\ as $n\to\infty$. 
Since $\rho\bigl(\pi(\mu)|_{L_0^2}\bigr)<1$ holds by Proposition~\ref{prop:spectral_gap}, this follows from \eqref{exponential}. 
Validity of \eqref{exponential} in turn follows from \eqref{quantitative} by an application of Borel--Cantelli and after letting $\alpha$ approach $\rho\bigl(\pi(\mu)|_{L_0^2}\bigr)$. 

Thus, it suffices to establish \eqref{quantitative}. 
To this end, observe first that, for every $n\ge N$, 
\begin{align*}
\norm*{\pi(\mu)^nf-\int f\dd \nu_\M}_{L^2}=\norm{\pi(\mu)^nf_0}_{L^2}\le \norm{\pi(\mu)|^n_{L_0^2}}_{\mathrm{op}}\norm{f_0}_{L^2}\le \alpha^n\norm{f_0}_{L^2}.
\end{align*}
By Chebyshev's inequality, it follows that for $n\ge N$ we have 
\begin{align*}
\nu_\M\br*{\set*{x\in X\for \abs*{\pi(\mu)^nf(x)-\int f\dd \nu_\M}\ge \alpha^{n/2}\norm{f_0}_{L^2}}}&\le \frac{\norm*{\pi(\mu)^nf-\int f\dd \nu_\M}_{L^2}^2}{\alpha^n\norm{f_0}_{L^2}^2}\\
&\le\alpha^n.
\end{align*}
Summing over $n'\ge n$ gives the bound \eqref{quantitative}. 
\end{proof}
\section{Pathwise Equidistribution}\label{sec:pathwise}
In this section, we aim to prove the following theorem. 
For the statement, recall that a measure $\mu$ on a linear group $G$ is said to have \emph{finite exponential moments} if for $\delta>0$ small enough, the function $g\mapsto \norm{g}^\delta$ is $\mu$-integrable, where $\norm{\cdot}$ denotes any matrix norm. 
\begin{theorem}\label{thm:pathwise}
Let $\H$ be a connected component of a stratum of the moduli space of unit-area Abelian differentials on compact Riemann surfaces, let $x\in\H$ and $\M=\overline{\SL_2(\R)x}$ be the minimal affine invariant submanifold containing $x$ endowed with its ergodic $\SL_2(\R)$-invariant measure $\nu_\M$. 
Moreover, let $\mu$ be an $\SO_2(\R)$-right-invariant probability measure on $\SL_2(\R)$ with finite exponential moments satisfying $\mu(\SO_2(\R))=0$. 
Then for $\mu^{\otimes\N}$-a.e.\ sequence $(g_i)_i$ we have 
\begin{align*}
\frac1n\sum_{k=0}^{n-1}\delta_{g_k\dotsm g_1x}\longrightarrow \nu_\M
\end{align*}
as $n\to\infty$ in the weak* topology. 
\end{theorem}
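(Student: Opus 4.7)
The plan is to follow the Benoist--Quint approach to pathwise equidistribution~\cite{BQ3}, adapted to Teichmüller dynamics in the style of Chaika--Eskin~\cite{chaika-eskin}. The argument has three main ingredients: tightness of the family of empirical measures $\nu_n := \frac{1}{n}\sum_{k=0}^{n-1}\delta_{y_k}$, where $y_k := g_k\cdots g_1 x$; a martingale argument showing that every weak* subsequential limit is $\mu$-stationary; and identification of the limit as $\nu_\M$ via measure classification. The spectral-gap input of Section~\ref{sec:spectral_gap} plays no direct role here; the driving analytic ingredients are instead drift inequalities and martingale estimates.

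For tightness I would use the proper Lyapunov function constructed in~\cite[Proposition~2.13]{EMM}. The assumptions $\mu(\SO_2(\R))=0$ and finite exponential moments are exactly what is needed to derive a drift inequality $\pi(\mu)\alpha \le c\alpha + b$ with some $c\in(0,1)$ and $b>0$. Iterating this inequality in the Foster--Lyapunov fashion produces a pathwise almost-sure bound on $\frac{1}{n}\sum_{k=0}^{n-1}\alpha(y_k)$, so for $\mu^{\otimes\N}$-almost every sequence $(g_i)_i$ the family $(\nu_n)_n$ is tight and every weak* limit point is a probability measure supported on $\M$.

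For the stationarity step I would observe that for each $f\in C_c(\H)$ the differences $f(y_{k+1}) - (\pi(\mu)f)(y_k)$ form a bounded martingale difference sequence in the natural filtration generated by $(g_i)_i$. Azuma's inequality together with Borel--Cantelli then yields
\begin{align*}
\int f\dd\nu_n - \int f\dd(\mu*\nu_n)\longrightarrow 0\quad\text{almost surely.}
\end{align*}
Combined with the tightness from the previous step (which allows one to pass the bounded continuous function $\pi(\mu)f$ to the limit), this shows that every weak* limit $\nu_\infty$ of $(\nu_n)$ along a subsequence is almost surely a $\mu$-stationary probability measure on $\M$. To then identify $\nu_\infty=\nu_\M$, I would exploit that the $\SO_2(\R)$-right invariance of $\mu$ forces $\nu_\infty$ to be $\SO_2(\R)$-invariant after averaging against Haar measure on $\SO_2(\R)$. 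Together with $\mu(\SO_2(\R))=0$ and finite exponential moments, the Eskin--Mirzakhani classification of $\SL_2(\R)$-invariant measures on $\H$ applies; a recurrence argument based on the Lyapunov control and the minimality of $\M=\overline{\SL_2(\R)x}$ then rules out concentration on a proper affine invariant submanifold.

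The main obstacle is this final identification: upgrading $\mu$-stationarity to full $\SL_2(\R)$-invariance relies on the deep Eskin--Mirzakhani classification, and the complementary step of excluding mass loss to a proper affine invariant submanifold of $\M$ requires combining the minimality of the orbit closure with a positive-frequency recurrence statement coming from the Lyapunov function. The tightness step itself must also be established in the pathwise almost-sure sense rather than merely in expectation, which is the technical heart of the Chaika--Eskin argument and what drives the precise hypotheses on $\mu$.
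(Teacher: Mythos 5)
Your overall route is the same as the paper's (martingale/Breiman stationarity of weak* limits of the empirical measures, Eskin--Mirzakhani classification of stationary measures, Benoist--Quint Lyapunov recurrence to prevent escape of mass and concentration on proper affine invariant submanifolds), but the two steps you treat as routine are exactly where the content of the theorem lies, and one of them is wrong as stated. In the identification step you assert that $\SO_2(\R)$-\emph{right}-invariance of $\mu$ forces the limit $\nu_\infty$ to be $\SO_2(\R)$-invariant after averaging: it does not. Writing $K=\SO_2(\R)$, from $\mu=\mu*m_K$ and $\mu*\nu_\infty=\nu_\infty$ nothing follows about $m_K*\nu_\infty$; it is \emph{left}-invariance $\mu=m_K*\mu$ that would give $\nu_\infty=m_K*\nu_\infty$. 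Moreover, Eskin--Mirzakhani do not classify ``$\SL_2(\R)$-invariant measures'' as an input --- that stationary measures are affine (in particular $\SL_2(\R)$-invariant) is the \emph{conclusion}, obtained through Furstenberg's correspondence between $\mu$-stationary and $P$-invariant measures, and that correspondence requires $\mu$ to be \emph{admissible}: $\supp(\mu)$ generates $\SL_2(\R)$ as a semigroup and some convolution power $\mu^{*k}$ is absolutely continuous with respect to Haar measure. Under the weakened hypotheses of this theorem (no absolute continuity of $\mu$ assumed) this must be verified, and it is precisely where $\mu(\SO_2(\R))=0$ enters: decomposing $\mu=\int\mu_{K,t}*\delta_{a_t}*m_K\dd\eta(t)$ with $\eta(\set{0})=0$, one checks that $m_K*\delta_{a_s}*m_K*\delta_{a_t}*m_K$ is absolutely continuous for $s,t\neq 0$, whence $\mu^{*3}$ is absolutely continuous. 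Your proposal never performs this check, and without it the classification theorem simply does not apply to the class of measures in the statement.

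The second gap is the drift inequality itself. The Lyapunov functions $f_\NN$ of \cite[Proposition~2.13]{EMM} satisfy the contraction $\pi(\mu_t)f_\NN\le\alpha f_\NN+\beta$ only for the specific measures $\mu_t=(a_t)_*m_K$ with $t\ge t_0$; it is not true that the hypotheses on $\mu$ ``exactly'' yield $\pi(\mu)V\le cV+b$ in one step. One must: (a) for $K$-bi-invariant $\mu$, decompose $\mu^{*m}=\int m_K*\delta_{a_t}*m_K\dd\eta^{(m)}(t)$ and invoke Furstenberg's theorem on positivity of the top Lyapunov exponent to get $\eta^{(m)}([0,t_0])\to 0$, giving an $m$-step contraction for large $m$; (b) reduce the merely right-invariant case to this via $\tilde\mu=m_K*\mu$, using $m_K*\mu^{*m}=\tilde\mu^{*m}$ and $K$-invariance of $f_\NN$; and (c) convert the $m$-step drift into a genuine one-step Lyapunov function, e.g.\ $V_\NN=\sum_{k=0}^{m-1}\alpha^{\delta(m-1-k)/m}\pi(\mu)^kf_\NN^\delta$, where the exponent $\delta\in(0,1)$ is chosen so that $\int\norm{g}^{\kappa\delta}\dd\mu(g)<\infty$ --- this is the only place the finite exponential moments are used, and without the $\delta$-power trick one cannot guarantee $V_\NN^{-1}(\set{\infty})=\NN$, so the recurrence estimate would exclude too much. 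You explicitly defer this as ``the technical heart,'' but Furstenberg's theorem and the power trick are missing ideas, not bookkeeping. Finally, two smaller omissions: ruling out all proper $\NN\subsetneq\M$ simultaneously requires intersecting one full-measure set of sequences per $\NN$, which is only legitimate because $\M$ contains at most countably many proper affine invariant submanifolds \cite[Proposition~2.16]{EMM}; and minimality of $\M$ enters not through any recurrence argument but simply to guarantee $x\notin\NN$, i.e.\ $V_\NN(x)<\infty$, so that the Benoist--Quint recurrence bound is applicable at the starting point.
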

The above theorem is essentially Chaika--Eskin's~\cite[Theorem~2.1]{chaika-eskin}, albeit with slightly weaker assumptions on the measure $\mu$. 
Indeed, in \cite{chaika-eskin} $\mu$ is assumed to be $\SO_2(\R)$-bi-invariant, absolutely continuous with respect to Haar measure on $\SL_2(\R)$, and to be compactly supported. 
While the former two differences in assumptions are largely insignificant, the upgrade from compact support to finite exponential moments does require a bit of care. 

The first step towards the proof of results such as Theorem \ref{thm:EMM} or Theorem \ref{thm:pathwise} always is the classification of (ergodic) $\mu$-stationary measures. 

Recall that given a continuous group action of $G$ on $X$ (as at the beginning of \S\ref{sec:spectral_gap}), a probability measure $\nu$ on $X$ is called \emph{$\mu$-stationary} if $\mu*\nu=\nu$, which means in more detail that
\begin{align*}
\int \int f(gx)\dd\mu(g)\dd\nu(x)=\int f\dd\nu
\end{align*}
for every bounded measurable function $f$ on $X$.

In \cite{EMi}, Eskin--Mirzakhani prove that $\mu$-stationary measures are necessarily affine when $\mu$ is absolutely continuous and $\SO_2(\R)$-bi-invariant. 
In fact, what they prove is that ergodic $P$-invariant measures are affine, where $P\subset \SL_2(\R)$ denotes the upper triangular subgroup, and then use that $\mu$-stationary measures are in correspondence with $P$-invariant measures by classical results of Furstenberg \cite{F1,F2} (see also \cite[Theorem~1.4]{NZ} for a concise restatement). 
These results of Furstenberg apply whenever $\mu$ is \emph{admissible}, meaning that $\supp(\mu)$ generates $\SL_2(\R)$ as a semigroup and some convolution power $\mu^{*k}$ is absolutely continuous with respect to Haar measure on $\SL_2(\R)$. 
We can therefore record the following. 
\begin{theorem}[\cite{EMi}]\label{thm:stationary}
Suppose that $\mu$ is admissible in the sense above. 
Then any ergodic $\mu$-stationary probability measure on $\H$ is affine. 
\end{theorem}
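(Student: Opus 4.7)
The plan is to combine two established results, following the strategy outlined in the paragraph preceding the statement: first, Furstenberg's correspondence between $\mu$-stationary measures and $P$-invariant measures, which applies precisely because $\mu$ is admissible; second, Eskin--Mirzakhani's classification of ergodic $P$-invariant measures on $\H$ as affine.

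Concretely, let $\nu$ be an ergodic $\mu$-stationary probability measure on $\H$. Admissibility of $\mu$ — namely, that $\supp(\mu)$ generates $\SL_2(\R)$ as a semigroup and that some convolution power $\mu^{*k}$ is absolutely continuous with respect to Haar measure on $\SL_2(\R)$ — is exactly what is required to apply Furstenberg's theorems \cite{F1,F2}, restated as \cite[Theorem~1.4]{NZ}. These provide a bijection between $\mu$-stationary probability measures on $\H$ and $P$-invariant probability measures on $\H$ under which ergodicity is preserved. Under this correspondence, $\nu$ matches an ergodic $P$-invariant probability measure $\tilde\nu$. By \cite{EMi}, every ergodic $P$-invariant probability measure on $\H$ is affine; hence $\tilde\nu$ is affine, and since the bijection sends affine measures to affine measures, so is $\nu$.

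The substance of the argument lies entirely in the two cited inputs, of which the Eskin--Mirzakhani classification is by far the deeper. Here both are invoked as black boxes, so the only work to do is to verify that the admissibility hypothesis matches the assumptions of Furstenberg's theorem — a routine check — and to record that $\nu$ corresponds under Furstenberg's bijection to an object to which \cite{EMi} directly applies. There is no genuine obstacle beyond this bookkeeping.
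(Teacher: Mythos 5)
Your proposal is correct and follows exactly the route the paper takes: the paper does not prove this theorem but records it as the combination, explained in the preceding paragraph, of Eskin--Mirzakhani's classification of ergodic $P$-invariant measures as affine with Furstenberg's correspondence between $\mu$-stationary and $P$-invariant measures, which applies precisely under the admissibility hypothesis. Your one additional observation --- that affine measures, being $\SL_2(\R)$-invariant, are fixed by the correspondence --- is a correct and harmless piece of bookkeeping the paper leaves implicit.
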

Let us quickly convince ourselves that this result applies to the measures in Theorem~\ref{thm:pathwise}. 
\begin{corollary}\label{cor:stationary}
If $\mu$ is $\SO_2(\R)$-right-invariant and satisfies $\mu(\SO_2(\R))=0$, then any ergodic $\mu$-stationary probability measure on $\H$ is affine. 
\end{corollary}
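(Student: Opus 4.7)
The plan is to verify that $\mu$ satisfies the admissibility hypothesis of Theorem~\ref{thm:stationary} and then invoke that theorem. Admissibility asks for two things: (i) some convolution power $\mu^{*n}$ is absolutely continuous with respect to Haar measure on $\SL_2(\R)$, and (ii) $\supp(\mu)$ generates $\SL_2(\R)$ as a semigroup.

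For (i), I would take $n=3$. Writing $K:=\SO_2(\R)$ and $m_K$ for its Haar probability, the assumed right-invariance gives $\mu=\mu*m_K$, hence $\mu^{*3}=\mu*m_K*\mu*m_K*\mu$. After decomposing each $\mu$-sample in $KAK$ coordinates as $\kappa^{(1)}a_t\kappa^{(2)}$ (with $\kappa^{(2)}$ uniform on $K$ by right-invariance) and absorbing the intermediate $m_K$'s into adjacent $K$-factors, a sample from $\mu^{*3}$ is distributed as $\kappa_1\,a_{t_1}K_1\,a_{t_2}K_2\,a_{t_3}\kappa_2$, where $K_1,K_2,\kappa_2$ are independent uniform on $K$ and the radii $t_i$ are strictly positive almost surely (by $\mu(K)=0$). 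A direct differential calculation shows that at parameter points with $K_2\neq e$ the three tangent vectors coming from $K_1$, $K_2$, $\kappa_2$ span $\mathfrak{sl}_2(\R)$: at the naive basepoint $K_2=e$ they fall into the $2$-dimensional subspace $\mathfrak{k}+\R W$, whereas a nontrivial $K_2$ brings in the missing Cartan direction $\mathfrak{a}$. Full rank on a set of full measure yields absolute continuity of $\mu^{*3}$.

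For (ii), pick $g_0\in\supp(\mu)\setminus K$, which is nonempty since $\mu(K)=0$. By right-invariance $g_0K\subset\supp(\mu)\subset S$, where $S$ denotes the semigroup generated by $\supp(\mu)$. The function $\theta\mapsto\operatorname{tr}(g_0k_\theta)$ is a nontrivial real sinusoid and hence vanishes for some $\theta_0$, making $g_0k_{\theta_0}$ elliptic of order four. Consequently $(g_0k_{\theta_0})^{-1}=(g_0k_{\theta_0})^3=k_{-\theta_0}g_0^{-1}$ lies in $S$; multiplying it on the right by elements of $g_0K$ sweeps out all rotations $k_\alpha$, so $K\subset S$, and then $g_0^{-1}=k_{\theta_0}\cdot(k_{-\theta_0}g_0^{-1})\in S$. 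The same argument gives $g^{-1}\in S$ for every $g\in\supp(\mu)\setminus K$. Because the rank-$3$ map from (i) has open image, $\supp(\mu^{*3})\subset S$ contains a nonempty open set $U$, and generic $u\in U$ factor as a product of three elements of $\supp(\mu)\setminus K$ whose inverses are in $S$, so $U^{-1}\subset S$. Hence $UU^{-1}\subset S$ is a symmetric open neighborhood of the identity, and any open subsemigroup of the connected Lie group $\SL_2(\R)$ containing a neighborhood of $I$ must coincide with $\SL_2(\R)$, giving (ii).

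The main obstacle will be the rank computation in (i): at the obvious basepoint the image of the differential is trapped inside the $2$-dimensional subspace $\mathfrak{k}+\R W$, and one has to verify carefully that a nontrivial middle rotation $K_2$ genuinely produces the missing Cartan direction. The trace-zero construction in (ii) is the other essential technical ingredient, serving to embed inverses into the semigroup.
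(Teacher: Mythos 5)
Your proposal is correct, and its skeleton is exactly the paper's: verify that $\mu$ is admissible --- absolute continuity of $\mu^{*3}$ plus semigroup generation --- and invoke Theorem~\ref{thm:stationary}. The difference is in how much gets proved: the paper dispatches both halves with an appeal to \enquote{hyperbolic geometry considerations}, asserting without detail that $m_K*\delta_{a_s}*m_K*\delta_{a_t}*m_K$ is absolutely continuous for $s,t\neq 0$ and that $\supp(\mu)$ generates $\SL_2(\R)$ as a semigroup, whereas you actually supply arguments. Your rank computation is sound: in right-trivialized coordinates, after factoring out a common $\Ad$-isomorphism and using $\Ad(k)W=W$ for $W$ the generator of $\mathfrak{k}$, the three tangent vectors are $W$, $\Ad(a_{t_2})W$ and $\Ad(a_{t_2}k_{\theta_2}a_{t_3})W$; the first two span the off-diagonal plane once $t_2\neq 0$, and the third has Cartan component $\pm\sinh(2t_3)\sin(2\theta_2)$, so full rank holds off a null set of parameters and absolute continuity of the mixture follows. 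Your semigroup argument is the genuinely novel ingredient relative to the paper's sketch: since $\operatorname{tr}(g_0k_\theta)=(a+d)\cos\theta+(b-c)\sin\theta$ vanishes for some $\theta_0$, Cayley--Hamilton gives $(g_0k_{\theta_0})^2=-I$, so the inverse is the cube and lies in $S$; this cleanly manufactures inverses, hence $K\subset S$, hence a symmetric open neighborhood of the identity, hence all of $\SL_2(\R)$ by connectedness. This algebraic route buys an explicit, checkable proof where the paper offers only a geometric hint.

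Two cosmetic corrections, neither a genuine gap. First, the degenerate subspace at $K_2=e$ is the off-diagonal plane $\R W\oplus\R V$ with $V=\left(\begin{smallmatrix}0&1\\1&0\end{smallmatrix}\right)$, not \enquote{$\mathfrak{k}+\R W$}, which is just the one-dimensional $\mathfrak{k}$ itself; your identification of the missing direction as the Cartan direction is the point that matters, and it is right. Second, $\supp(\mu^{*3})$ is the \emph{closure} of $\supp(\mu)^3$ and need not be contained in the (not necessarily closed) semigroup $S$; the harmless fix is to take $U$ inside the image of the map $(\beta_1,\beta_2,\beta_3)\mapsto g_1k_{\beta_1}g_2k_{\beta_2}g_3k_{\beta_3}$ for fixed $g_1,g_2,g_3\in\supp(\mu)\setminus K$, which lies in $\supp(\mu)^3\subset S$ by right-$K$-invariance of the support, and to observe that your rank computation applies verbatim to this restricted map (the intermediate fixed rotations only shift the circle variables), so its image already contains a nonempty open set. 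With that choice of $U$, every $u\in U$, not merely generic $u$, factors into three elements of $\supp(\mu)\setminus K$, and the rest of your argument goes through unchanged.
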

\begin{proof}
We consider the $KAK$-decomposition of $\SL_2(\R)$, where $K=\SO_2(\R)$ and
\begin{align*}
A=\set*{a_t\coloneqq\begin{pmatrix}\euler^t&0\\0&\euler^{-t}\end{pmatrix}\for t\in\R}.
\end{align*}
Then, in view of $K$-right-invariance of $\mu$, we can decompose $\mu$ as
\begin{align}\label{mu_representation}
\mu=\int\mu_{K,t}*\delta_{a_t}*m_K\dd\eta(t),
\end{align}
where $\eta$ is a probability measure on $\R$, $\mu_{K,t}$ is a probability measure on $K$ for every $t\in\R$, $\delta_{a_t}$ denotes the Dirac mass at $a_t\in A$, and $m_K$ is the Haar probability measure on $K$. 
The assumption that $\mu(K)=0$ then translates to the statement that $\eta(\set{0})=0$. 
It is not difficult to see, e.g.\ by hyperbolic geometry considerations, that $m_K*a_s*m_K*a_t*m_K$ is absolutely continuous with respect to Haar measure on $\SL_2(\R)$ whenever $s,t\neq 0$. 
Calculating the third convolution power using the representation \eqref{mu_representation} of $\mu$ above, it follows that $\mu^{*3}$ is absolutely continuous as well. 
Also by hyperbolic geometry, $\supp(\mu)$ generates $\SL_2(\R)$ as a semigroup. 
Hence, $\mu$ is admissible and Theorem~\ref{thm:stationary} applies. 
\end{proof}

Even though there is more to be done, let us already now give the proof of the main theorem of this section, as it will motivate the remaining work.
\begin{proof}[Proof of Theorem \textup{\ref{thm:pathwise}}]
We consider the one-point compactification $\overline{\M}=\M\cup\set{\infty}$ of the affine invariant submanifold $\M$ and are going to prove the desired convergence statement inside the space $\mathcal{P}(\overline{\M})$ of probability measures on $\overline{\M}$. 
As $\mathcal{P}(\overline{\M})$ is compact in the weak* topology, it suffices to show that, almost surely, every weak* limit in $\mathcal{P}(\overline{\M})$ of the sequence $(\frac1n\sum_{k=0}^{n-1}\delta_{g_k\dotsm g_1x})_n$ of empirical measures equals $\nu_\M$. 

To this end, we start by observing that by the Breiman law of large numbers (see \cite[Corollary~3.3]{BQ3}), for $\mu^{\otimes\N}$-a.e.\ sequence $(g_i)_i$ every weak* limit $\nu$ of the sequence of empirical measures is $\mu$-stationary. 
Let $F_1$ be a full measure set with respect to $\mu^{\otimes\N}$ such that this conclusion holds for every $(g_i)_i\in F_1$. 
By Corollary~\ref{cor:stationary}, for $(g_i)_i\in F_1$ the measures featuring with positive weight in the ergodic decomposition of a weak* limit $\nu$ as above can only be affine measures $\nu_\NN$ with $\NN\subset\M$ and the point mass~$\delta_\infty$ at infinity, the latter corresponding to potential escape of mass. 
We will show below (Corollary~\ref{cor:BQ} and Proposition~\ref{prop:lyapunov}) that $\delta_\infty$ and any given affine measure $\nu_\NN$ with $\NN\subsetneq\M$ do not appear in the decomposition with positive weight for $\mu^{\otimes\N}$-a.e.\ $(g_i)_i$. 
Let $F\subset F_1$ be the intersection of these full measure sets associated to $\delta_\infty$ and all $\nu_\NN$ for $\NN\subsetneq\M$. 
As $\M$ admits only countably many proper affine invariant submanifolds (see \cite[Proposition~2.16]{EMM}), $F$ still has full measure. 
Choosing $(g_i)_i$ in $F$, we conclude that almost every ergodic component of every weak* limit $\nu$ of the sequence of empirical measures equals $\nu_\M$, which proves the desired pathwise convergence statement. 
\end{proof}
We see that it remains to rule out the occurrence of unwanted limit measures. The key tool to achieve this is the following concept. 
\begin{definition}\label{def:lyapunov}
Consider a measurable group action of $G$ on a standard Borel space $X$. 
A measurable function $V\colon X\to[0,\infty]$ is called a \emph{Lyapunov function} for the random walk on $X$ induced by a probability measure $\mu$ on $G$ if there exist constants $\alpha\in(0,1)$, $\beta\ge 0$ such that $\pi(\mu)V\le \alpha V+\beta$, where $\pi(\mu)$ is the associated convolution operator introduced in \parsign\ref{sec:spectral_gap}. 
\end{definition}

Intuitively speaking, the contraction inequality means that after a step of the random walk, the value of the Lyapunov function $V$ on average gets smaller by a constant factor, at least outside some compact set where the value of $V$ lies below some threshold depending on the additive constant $\beta$. 
The dynamics are therefore directed towards the part of the space where $V$ takes small values. 
The following quantification of this phenomenon is due to Benoist--Quint, but similar ideas already have a long and successful tradition in Markov chain theory (see e.g.~\cite[Theorem~18.5.2]{meyn-tweedie} and the references given there). 
\begin{proposition}[{\cite[Proposition~3.9]{BQ3}}]\label{prop:BQ}
Suppose the random walk on $X$ induced by $\mu$ admits a Lyapunov function $V\colon X\to[0,\infty]$. 
Then there exists a constant $C>0$ such that for any $x\in X$ with $V(x)<\infty$, for $\mu^{\otimes\N}$-a.e.\ $(g_i)_i$ we have for any $M>0$
\begin{align*}
\limsup_{n\to\infty}\frac1n\abs{\set{0\le k<n\for V(g_k\dotsm g_1x)>M}}\le \frac{C}{M}.
\end{align*}
\end{proposition}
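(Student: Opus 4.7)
The plan is to reduce the statement, via a pointwise Markov inequality, to an almost-sure upper bound on the Cesaro averages of $V$ along the random walk, and then extract such a bound from a telescoping martingale decomposition built on the Lyapunov inequality. The first reduction is immediate: since $\mathbf{1}_{\{V>M\}}\le V/M$ pointwise, the indicator count satisfies
\[
\frac{1}{n}\abs{\set{0\le k<n\for V(g_k\dotsm g_1x)>M}}\le \frac{1}{Mn}\sum_{k=0}^{n-1}V(g_k\dotsm g_1x),
\]
so it suffices to produce a constant $C$ (the heuristic below will suggest $C=\beta/(1-\alpha)$) such that for $\mu^{\otimes\N}$-a.e.\ sequence, $\limsup_n\frac{1}{n}\sum_{k=0}^{n-1}V(x_k)\le C$, where $x_k=g_k\dotsm g_1x$.

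I would then introduce a martingale decomposition. Writing $\mathcal F_k$ for the natural filtration, the increments $\Delta_k:=V(x_k)-\pi(\mu)V(x_{k-1})$ have zero conditional mean, so the partial sums $M_n=\sum_{k=1}^n\Delta_k$ form a zero-mean martingale. Telescoping
\[
V(x_n)-V(x_0)=M_n+\sum_{k=0}^{n-1}\bigl(\pi(\mu)V-V\bigr)(x_k),
\]
substituting the Lyapunov bound $\pi(\mu)V-V\le -(1-\alpha)V+\beta$, and using $V(x_n)\ge 0$, one obtains after rearrangement
\[
(1-\alpha)\cdot\frac{1}{n}\sum_{k=0}^{n-1}V(x_k)\le \beta+\frac{V(x_0)+M_n}{n}.
\]
The entire statement thus reduces to showing $M_n/n\to 0$ almost surely.

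This last step is the main obstacle, because the Lyapunov hypothesis provides no a priori higher-moment control on the increments $\Delta_k$; iterating $\pi(\mu)V\le\alpha V+\beta$ only yields the $L^1$-bound $\E[V(x_k)]\le \alpha^kV(x)+\beta/(1-\alpha)$, which alone is too weak for a martingale strong law. I would handle this by truncation: for each level $L>0$, the martingale associated to $V\wedge L$ has differences bounded in absolute value by $2L$, so Azuma's inequality combined with Borel--Cantelli gives $M_n^{(L)}/n\to 0$ almost surely at exponential rate. The tail correction $V-V\wedge L$ is controlled uniformly in $k$ by the iterated Lyapunov bound, and a diagonal argument letting $L$ grow with $n$ in a suitable way combines the two pieces to conclude $M_n/n\to 0$ almost surely. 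The proposition then follows with $C=\beta/(1-\alpha)$.
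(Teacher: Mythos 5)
The paper does not prove this proposition; it quotes it from Benoist--Quint \cite[Proposition~3.9]{BQ3}, so your attempt has to be measured against their argument. Your very first reduction---via $\mathds{1}_{\{V>M\}}\le V/M$, to the claim that $\limsup_n\frac1n\sum_{k<n}V(x_k)\le C$ almost surely---already oversteps: that Ces\`aro bound is strictly stronger than the stated occupation-time bound, and it is \emph{not} a consequence of the only hypothesis your proof ever invokes, namely the conditional drift $\E[V(x_k)\mid\mathcal{F}_{k-1}]\le\alpha V(x_{k-1})+\beta$. Indeed, take $(U_k)_k$ i.i.d.\ uniform on $[0,1]$, $\mathcal{F}_k=\sigma(U_1,\dots,U_k)$, and $\phi_k=k\beta\,\mathds{1}_{\{U_k\le 1/k\}}$. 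Then $\E[\phi_{k+1}\mid\mathcal{F}_k]=\beta\le\alpha\phi_k+\beta$, yet almost surely $\limsup_n\frac1n\sum_{k\le n}\phi_k=\infty$: the numbers of spikes in the disjoint dyadic windows $(2^j,2^{j+1}]$ are independent and converge in law to a Poisson variable of mean $\log 2$, so by the second Borel--Cantelli lemma, for every $m$ infinitely many windows contain at least $m$ spikes, and each such spike contributes at least $\beta/2$ to the average at time $n=2^{j+1}$. At the same time the frequency of $\{\phi_k>M\}$ tends to $0$ for every $M$, so the conclusion of the proposition holds for this sequence. This sequence is not literally of the form $V(g_k\dotsm g_1x)$, but your argument uses nothing beyond the adapted drift inequality and therefore cannot distinguish the two; the reduction target is unprovable (indeed false) at that level of generality.

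Concretely, the step where your proof breaks is the truncation. The claim that the tail correction $V-V\wedge L$ is \enquote{controlled uniformly in $k$ by the iterated Lyapunov bound} is unjustified: iterating the drift gives only $\sup_k\E[V(x_k)]\le V(x)+\beta/(1-\alpha)$, a uniform \emph{first-moment} bound, whereas to kill the tail term you would need uniform integrability, $\sup_k\E[V(x_k)\mathds{1}_{\{V(x_k)>L\}}]\to 0$ as $L\to\infty$, which does not follow (in the example above this quantity equals $\beta$ for all $k\ge L/\beta$), so no diagonal choice $L=L(n)$ can close the argument. A secondary defect is that $V\wedge L$ need not satisfy a Lyapunov inequality: on a state $x$ with $V(x)>L$ from which the walk remains above level $L$ with high probability, $\pi(\mu)(V\wedge L)(x)$ can be close to $L>\alpha L+\beta$, so even the drift part of your telescoping identity degrades under truncation. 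Benoist--Quint's proof is structured precisely to avoid both issues: it applies martingale law-of-large-numbers arguments only to \emph{bounded} functions tied to the fixed level $M$, combined with the iterated drift $\E[V(x_{k+p})\mid\mathcal{F}_k]\le\alpha^pV(x_k)+\beta/(1-\alpha)$, and bounds the time spent in superlevel sets directly, never passing through an almost-sure bound on the Ces\`aro averages of the unbounded function $V$. Your martingale decomposition and Azuma step are fine as far as they go (for fixed $L$), but the overall strategy cannot be repaired without importing genuinely new structure.
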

\begin{corollary}\label{cor:BQ}
Let $G$ be a locally compact $\sigma$-compact metrizable group and $X$ a locally compact $\sigma$-compact metrizable space endowed with a continuous $G$-action. 
Let $\mu$ be a probability measure on $G$ and suppose that the induced random walk on $X$ admits a Lyapunov function $V\colon X\to[0,\infty]$ with the additional property that for every $M>0$ the sublevel set $X_M=V^{-1}([0,M])$ is relatively compact and its closure $\overline{X_M}$ is contained in $X\setminus V^{-1}(\set{\infty})$. 
Then for $\mu^{\otimes\N}$-a.e.\ $(g_i)_i$, any weak* limit $\nu$ of $(\frac1n\sum_{k=0}^{n-1}\delta_{g_k\dotsm g_1x})_n$ satisfies $\nu(X\setminus V^{-1}(\set{\infty}))=1$. 
\end{corollary}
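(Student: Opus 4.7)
The plan is to combine the quantitative time-average bound of Proposition~\ref{prop:BQ} with the Portmanteau theorem applied inside the one-point compactification $\overline{X}=X\cup\set{\infty}$. The hypothesis that every sublevel set $\overline{X_M}$ is compact in $X$ (and thus closed in $\overline{X}$ and disjoint from the point at infinity) is exactly what converts the bound on the proportion of iterates lying in $X_M$ into a bound on the $\nu$-mass of a closed subset of $\overline{X}$ that avoids both the added point at infinity and $V^{-1}(\set{\infty})$.

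Concretely, I would start by fixing an $x\in X$ with $V(x)<\infty$ (an implicit hypothesis inherited from Proposition~\ref{prop:BQ}, and satisfied in the intended application to Theorem~\ref{thm:pathwise}) and applying Proposition~\ref{prop:BQ} to obtain a constant $C>0$ and a single $\mu^{\otimes\N}$-conull set $F$ of sequences $(g_i)_i$ along which
\begin{align*}
\liminf_{n\to\infty}\frac{1}{n}\abs*{\set*{0\le k<n\for V(g_k\dotsm g_1x)\le M}}\ge 1-\frac{C}{M}
\end{align*}
holds for \emph{every} $M>0$ simultaneously; crucially, the null set furnished by that proposition does not depend on $M$. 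Now fix $(g_i)_i\in F$ and let $\nu\in\mathcal{P}(\overline{X})$ be any weak* limit of the empirical measures $\mathcal{P}_n=\frac{1}{n}\sum_{k=0}^{n-1}\delta_{g_k\dotsm g_1x}$ along some subsequence $(n_j)_j$. For each fixed $M>0$, the set $\overline{X_M}$ is compact in $X$ by assumption and hence closed in $\overline{X}$, so the Portmanteau theorem gives
\begin{align*}
\nu\br*{\overline{X_M}}\ge\limsup_{j\to\infty}\mathcal{P}_{n_j}\br*{\overline{X_M}}\ge\liminf_{n\to\infty}\mathcal{P}_n(X_M)\ge 1-\frac{C}{M}.
\end{align*}
Since $\overline{X_M}\subset X\setminus V^{-1}(\set{\infty})$ by hypothesis, this yields $\nu\br*{X\setminus V^{-1}(\set{\infty})}\ge 1-C/M$, and sending $M\to\infty$ completes the argument.

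The proof is essentially routine, and I do not anticipate a genuine obstacle. The one point deserving care is the uniformity in $M$ of the exceptional null set: if Proposition~\ref{prop:BQ} only gave a null set depending on $M$, one would have to intersect over a countable sequence $M_j\to\infty$ and use monotonicity of $X_M$ in $M$. As stated, however, the proposition already provides a single exceptional set working for all thresholds, so no such bookkeeping is needed.
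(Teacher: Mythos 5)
Your proposal is correct and takes essentially the same route as the paper: both combine Proposition~\ref{prop:BQ} (whose exceptional null set is indeed uniform in $M$) with the upper semicontinuity, under weak* convergence in the one-point compactification, of the mass of the compact set $\overline{X_M}\subset X\setminus V^{-1}(\set{\infty})$. The only cosmetic difference is that you cite the Portmanteau theorem, whereas the paper proves the same inequality by hand, using regularity of $\nu$ to produce a compactly supported continuous function $f_{\epsilon,M}$ equal to $1$ on $\overline{X_M}$ and testing the weak* convergence against it.
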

\begin{proof}
Fix a sequence $(g_i)_i$ such that the conclusion of Proposition~\ref{prop:BQ} holds. 
Since the measure $\nu$ is regular, the assumptions imply that for every $\epsilon>0$ and $M>0$ there exists a non-negative compactly supported continuous function $f_{\epsilon,M}$ on $X$ bounded by $1$ which takes the value $1$ on $\overline{X_M}$ such that
\begin{align*}
\nu(\overline{X_M})\ge \int f_{\epsilon,M}\dd\nu-\epsilon.
\end{align*}
Applying weak* convergence to this function, it follows that
\begin{align*}
\nu(X\setminus V^{-1}(\set{\infty}))\ge\nu(\overline{X_M})\ge\int f_{\epsilon,M}\dd\nu-\epsilon\ge 1-C/M-\epsilon. 
\end{align*}
Letting $M\to\infty$ and $\epsilon\to 0$ thus establishes the claim. 
\end{proof}
In view of the above, it remains to find Lyapunov functions $V$ on $\H$ taking the value $\infty$ precisely on a given affine invariant submanifold $\NN$ and satisfying the properness conditions in Corollary~\ref{cor:BQ}. 
The case $\NN=\emptyset$ (responsible for ruling out escape of mass, i.e.\ the occurrence of $\delta_\infty$ as part of the limit measure) was dealt with by Athreya \cite{Ath}; most of the work necessary for the general case was carried out in \cite{EMM}.
\begin{proposition}[{\cite[Proposition~2.13]{EMM}}]\label{prop:lyapunov_pre}
For $t>0$, define $\mu_t= (a_t)_*m_K$. 
Let $\NN\subset \H$ be an affine invariant submanifold \textup{(}$\NN=\emptyset$ is allowed\textup{)}. 
Then there exists $\beta\ge 0$ and an $\SO_2(\R)$-invariant function $f_\NN\colon\H\to[1,\infty]$ with the following properties: 
\begin{enumerate}
\item $f_\NN^{-1}(\set{\infty})=\NN$ and for every $M>0$ the closure of the sublevel set $f_\NN^{-1}([0,M])$ is compact and contained in $\H\setminus\NN$,
\item for every $0<\alpha<1$ there exists $t_0$ such that for $t\ge t_0$ it holds that
\begin{align*}
\pi(\mu_t)f_\NN\le \alpha f_\NN+\beta,
\end{align*}
and
\item for some $\sigma>1$ and all $g$ in a neighborhood of the identity in $\SL_2(\R)$ we have
\begin{align*}
\sigma^{-1}f_\NN(x)\le f_\NN(gx)\le \sigma f_\NN(x)
\end{align*}
for all $x\in \H$.
\end{enumerate}
\end{proposition}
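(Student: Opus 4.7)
The plan is to build $f_\NN$ as a sum $f_\emptyset + g_\NN$ of two $\SO_2(\R)$-invariant pieces: an Eskin--Masur height function controlling the cusp of $\H$, and a penalty term that blows up precisely on $\NN$. For the empty case $\NN=\emptyset$ one takes $f_\emptyset(x)=\max(1,\ell_{\min}(x)^{-1})$, where $\ell_{\min}(x)$ is the length of the shortest saddle connection of the translation surface $x$. Property (i) is then the Mumford--Masur compactness criterion, property (iii) is immediate because $g$ in a small neighborhood of the identity changes each holonomy vector by a bounded linear factor, and property (ii) is the classical Eskin--Masur inequality whose crux is the one-variable estimate
\begin{align*}
\int_{\SO_2(\R)} \min\bigl(1,\abs{a_t k v}^{-1}\bigr)\dd m_K(k)\le C\euler^{-t}\abs{v}^{-1}+C'
\end{align*}
for $v\in\R^2\setminus\{0\}$ and $t$ large, which transfers to the maximum over saddle connection holonomies.

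For a proper affine invariant submanifold $\NN\subsetneq\H$, the key observation is that in period coordinates $\H\hookrightarrow H^1(M,\Sigma;\C)=H^1(M,\Sigma;\R)\otimes\R^2$, a neighborhood of a smooth point of $\NN$ is identified with (an open piece of) a complex linear subspace $T$, and $T$ is $\SL_2(\R)$-invariant because the $\SL_2(\R)$-action on period coordinates is the tensor product of the identity on $H^1(M,\Sigma;\R)$ with the standard action on $\R^2$. Hence the local transverse distance $d_\NN$ to $T$ (measured in the Hodge norm, say, and computed coordinatewise in a basis of $H^1(M,\Sigma;\C)/T$) inherits the same contraction property,
\begin{align*}
\int_{\SO_2(\R)} d_\NN(a_tkx)^{-1}\dd m_K(k)\le C\euler^{-t} d_\NN(x)^{-1}+C',
\end{align*}
from the $\R^2$-computation above. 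Patching local reciprocal distances by a smooth $\SO_2(\R)$-invariant partition of unity over a countable cover of $\NN$ by period charts yields a global $g_\NN\colon\H\to[1,\infty]$ with $g_\NN^{-1}(\{\infty\})=\NN$, relatively compact sublevel sets in $\H\setminus\NN$, the inherited contraction estimate, and the log-Lipschitz property (iii). Setting $f_\NN = f_\emptyset + g_\NN$, all three properties follow by linearity; after selecting $t_0$ large enough that each summand contracts by the prescribed factor $\alpha$, the additive constants combine into a single $\beta$.

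The main obstacle is the globalisation of the contraction estimate for $g_\NN$. Near $\NN$ the chart computation genuinely gives the factor $C\euler^{-t}$; far from $\NN$ the function $g_\NN$ is bounded, so any defect there is absorbed into $\beta$; but in the intermediate transition region, where $a_tkx$ may leave the starting chart along the expanding direction or cross between overlapping charts, one must verify that the new transverse distance does not spoil the estimate. This is handled by choosing the cover so that charts overlap generously relative to the displacement produced by $a_t$ for $t\in[t_0,t_0+1]$, by taking a uniform version of the Eskin--Masur estimate across all charts, and by truncating $g_\NN$ at a large constant outside a fixed neighbourhood of $\NN$; this is essentially a bookkeeping exercise along the lines of \cite[\S2]{EMM}, but it is where the real work lies.
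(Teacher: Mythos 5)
First, note what you are comparing against: the paper does not prove this proposition at all --- it imports it verbatim as \cite[Proposition~2.13]{EMM}, whose proof is the technical heart of a long section of that paper. Your sketch reproduces the correct overall architecture of the EMM construction (a cusp height function plus a penalty blowing up on $\NN$, with the contraction coming from the elementary $K$-average estimate on $\R^2$), but the two places where you defer the work are precisely where the real difficulties lie, and in both cases your stated reduction would fail as written. For the cusp part, $f_\emptyset=\max(1,\ell_{\min}^{-1})$ does not satisfy the contraction inequality, and the single-vector estimate does not ``transfer to the maximum'': several short, pairwise non-parallel saddle connections can coexist, new ones become short under the flow, and controlling this is exactly why Eskin--Masur (and Athreya, for the Lyapunov-function formulation) replace the naive reciprocal of the systole by carefully weighted sums over complexes of saddle connections with tuned exponents. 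You can cite that result, but you cannot derive it in one line from the estimate $\int_K\min(1,\abs{a_tkv}^{-1})\dd m_K(k)\le C\euler^{-t}\abs{v}^{-1}+C'$.

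The more serious gap is the globalisation of the transverse-distance contraction, which you dismiss as ``essentially a bookkeeping exercise.'' It is not: in a single period chart the action is indeed $\mathrm{id}\otimes g$ (using, implicitly, that $T=W\otimes\R^2$ for a real subspace $W$, i.e.\ the Avila--Eskin--M\"oller theorem that affine manifolds are defined over $\R$ --- a nontrivial input you should flag), but over the time scales $t\ge t_0$ relevant for property~(ii) the orbit leaves any fixed chart, and comparing transverse distances across charts involves the Gauss--Manin identification, i.e.\ the Kontsevich--Zorich cocycle. A priori its growth could cancel the factor $\euler^{-t}$ gained fiberwise; the estimate survives only because the cocycle on the directions transverse to the tautological plane grows strictly slower than $\euler^t$, which rests on Forni's spectral-gap bounds for the Hodge norm. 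Moreover, $\NN$ is itself non-compact, so the chart cover, the overlap constants, and the interaction between the transverse distance and the height function near the cusps of $\NN$ must be made uniform --- this is why in \cite{EMM} the two quantities are combined with carefully chosen exponents rather than glued by a partition of unity, whose cutoffs do not respect an inequality of the form $\pi(\mu_t)g\le\alpha g+\beta$ once $a_tk$ moves points between chart supports. Either carry out these estimates (which would amount to rewriting a substantial part of \cite{EMM}) or, as the paper does, cite \cite[Proposition~2.13]{EMM} directly.
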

Since any $g\in\SL_2(\R)$ is a product of at most $O(\log\norm{g})+1$ elements of a given neighborhood of the identity, iterating (iii) above we more generally obtain:
\begin{enumerate}
\item[(iii')] there exist constants $\sigma>1,\kappa>0$ such that for every $g\in\SL_2(\R)$ and $x\in\H$
\begin{align*}
\sigma^{-1}\norm{g}^{-\kappa}f_\NN(x)\le f_\NN(gx)\le \sigma\norm{g}^\kappa f_\NN(x).
\end{align*}
\end{enumerate}

The final step is to use the functions provided by the above proposition to construct the required Lyapunov functions for the measures from the statement of Theorem \ref{thm:pathwise}.
\begin{proposition}\label{prop:lyapunov}
Let $\mu$ be a probability measure on $\SL_2(\R)$ with finite exponential moments that is $\SO_2(\R)$-right-invariant and not equal to the Haar measure on $\SO_2(\R)$. 
Let $\NN\subset \H$ be an affine invariant submanifold. 
Then there exists a Lyapunov function $V_\NN$ for $\mu$ with $V_\NN^{-1}(\set{\infty})=\NN$ that satisfies the conditions of Corollary~\textup{\ref{cor:BQ}}. 
\end{proposition}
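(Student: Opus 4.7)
The plan is to take $V_\NN=f_\NN^\delta$ for a sufficiently small $\delta>0$, where $f_\NN$ is the function supplied by Proposition~\ref{prop:lyapunov_pre}, and to establish the contraction in two steps: first for some iterate $\mu^{*n_0}$, then transferring the inequality to $\mu$ itself. Combining the $K$-right-invariance of $\mu^{*n}$ with the $K$-invariance of $f_\NN$ (hence of $V_\NN$), the $KAK$-decomposition used in the proof of Corollary~\ref{cor:stationary} reduces the analysis to
\begin{align*}
\pi(\mu^{*n})V_\NN(x)=\int_0^\infty \pi(\mu_t)V_\NN(x)\dd\eta^{(n)}(t),
\end{align*}
where $\eta^{(n)}$ denotes the law of $\log\norm{g}$ under $\mu^{*n}$. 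Two complementary pointwise bounds on $\pi(\mu_t)V_\NN$ are then available: the universal but weak $\pi(\mu_t)V_\NN\le\sigma^\delta\euler^{\kappa\delta t}V_\NN$ from~(iii'), and the sharp $\pi(\mu_t)V_\NN\le\alpha^\delta V_\NN+\beta^\delta$ valid for $t\ge t_0(\alpha)$ and any fixed $\alpha\in(0,1)$, obtained from~(ii) via Jensen's inequality combined with the subadditivity of $y\mapsto y^\delta$ on $[0,\infty)$.

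Splitting the $\eta^{(n_0)}$-integral at $t_0(\alpha)$ yields
\begin{align*}
\pi(\mu^{*n_0})V_\NN\le\Bigl[\sigma^\delta\euler^{\kappa\delta t_0(\alpha)}\eta^{(n_0)}\bigl([0,t_0(\alpha))\bigr)+\alpha^\delta\Bigr]V_\NN+\beta^\delta.
\end{align*}
Because $\mu$ is $K$-right-invariant, not equal to $m_K$, and has finite exponential moments, the group generated by $\supp(\mu)$ equals all of $\SL_2(\R)$ (the maximal compact $K$ is self-normalizing, so any $g\in\supp\mu\setminus K$ yields a distinct conjugate $gKg^{-1}\neq K$, and $K$ together with $gKg^{-1}$ generates the whole group), so Furstenberg's theorem produces a positive top Lyapunov exponent. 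By the law of large numbers, $\eta^{(n_0)}([0,t_0(\alpha)))\to 0$ as $n_0\to\infty$ for any fixed $\alpha$. One therefore first fixes $\alpha\in(0,1)$, then chooses $\delta>0$ small enough that $\int\norm{g}^{\kappa\delta}\dd\mu<\infty$, and finally $n_0$ large enough that the bracketed coefficient is strictly less than some $\lambda<1$, yielding $\pi(\mu^{*n_0})V_\NN\le\lambda V_\NN+\beta^\delta$.

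To convert this into a Lyapunov inequality for $\mu$, set $V_\NN'=\sum_{k=0}^{n_0-1}\pi(\mu)^kV_\NN$. Iterating~(iii') and invoking the finite exponential moments of $\mu$ gives $V_\NN\le V_\NN'\le C_0V_\NN$ for some $C_0<\infty$, and the telescoping identity $\pi(\mu)V_\NN'=V_\NN'-V_\NN+\pi(\mu^{*n_0})V_\NN$ combined with the $n_0$-step contraction and the lower bound $V_\NN\ge V_\NN'/C_0$ yields $\pi(\mu)V_\NN'\le\bigl(1-(1-\lambda)/C_0\bigr)V_\NN'+\beta^\delta$, as required. The identity $(V_\NN')^{-1}(\set{\infty})=\NN$ and the compactness of sublevel sets (with closures disjoint from $\NN$) follow from Proposition~\ref{prop:lyapunov_pre}(i) via the two-sided comparison $V_\NN\le V_\NN'\le C_0V_\NN$. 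The main obstacle is the interlocked choice of parameters: $\delta$ must be small enough that both $\int\norm{g}^{\kappa\delta}\dd\mu$ is finite and the factor $\sigma^\delta\euler^{\kappa\delta t_0(\alpha)}$ does not overwhelm the contraction, while $n_0$ must be large enough for the radial measure of $\mu^{*n_0}$ to concentrate sufficiently far from the origin. This is precisely the trade-off that distinguishes the finite-exponential-moments assumption from the compactly supported case treated in~\cite{chaika-eskin} and necessitates the detour through $\mu^{*n_0}$.
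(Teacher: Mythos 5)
Your proposal is correct and follows essentially the same route as the paper: the $KAK$-decomposition of $\mu^{*n}$ against $K$-invariance of $f_\NN$, Furstenberg's positive Lyapunov exponent to force $\eta^{(n_0)}([0,t_0])\to 0$, Jensen plus subadditivity of $y\mapsto y^\delta$ to exploit the finite exponential moments, and a sum $\sum_{k<n_0}\pi(\mu)^k f_\NN^\delta$ to convert the $n_0$-step contraction into a one-step one. The only differences are cosmetic: you handle mere $K$-right-invariance directly via the decomposition \eqref{mu_representation} where the paper reduces to the bi-invariant case via $\tilde\mu=m_K*\mu$, and you use an unweighted sum with the two-sided comparison $V_\NN\le V_\NN'\le C_0V_\NN$ where the paper takes a geometrically weighted sum whose telescoping gives the contraction factor $\alpha^{\delta/m}$ directly.
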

The proof is an extension of the argument for \cite[Lemma~3.2]{EMM}. 
\begin{proof}
We will first show that there exists $m\in\N$ such that the function $f_\NN$ provided by Proposition~\ref{prop:lyapunov_pre} is a Lyapunov function for the $m$-step random walk, i.e.\ with $\pi(\mu)^mf_\NN\le \alpha f_\NN+\beta$ for some $\alpha<1,\beta\ge 0$. 

We first treat the case of a $K\coloneqq \SO_2(\R)$-bi-invariant measure $\mu$. 
In this case, also the convolution powers of $\mu$ are $K$-bi-invariant, so that for every $m\in\N$ there exists a probability measure $\smash{\eta^{(m)}}$ on $\R_+$ such that 
\begin{align}\label{mum_decomp}
\mu^{*m}=\int_{\R_+}m_K*\delta_{a_t}*m_K\dd\eta^{(m)}(t).
\end{align}
Since the random walk on $\SL_2(\R)$ given by $\mu$ has a positive top Lyapunov exponent by Furstenberg's theorem~\cite{F1} (see~\cite[Theorem~II.4.1]{bl} for a precise restatement in the context at hand), we know that $\norm{Y_m\dotsm Y_1}$ almost surely grows exponentially, where $(Y_i)_i$ is a sequence of i.i.d.\ random matrices with common distribution $\mu$. 
As the quantity $\smash{\eta^{(m)}}([0,t_0])$ for a fixed $t_0>0$ represents the probability that $Y_m\dotsm Y_1$ lies in some fixed bounded subset of $\SL_2(\R)$, it follows that $\smash{\eta^{(m)}}([0,t_0])\to 0$ as $m\to\infty$. 
Next, from property~(ii) in Proposition~\ref{prop:lyapunov_pre} we know that given $\alpha\in(0,1)$ there exists $t_0$ such that $\pi(\mu_t)f_\NN\le \frac{\alpha}{2}f_\NN+\beta$ for all $t\ge t_0$, where $\mu_t=(a_t)_*m_K$ is the measure defined in that proposition. 
Iterating property~(iii) of the function~$f_\NN$, there exists some constant $R>0$ such that $f_\NN(a_tkx)\le Rf_\NN(x)$ for all $k\in K$, $0\le t\le t_0$ and $x\in\H$. 
Using \eqref{mum_decomp} and $K$-invariance of $f_\NN$, we thus find
\begin{align*}
\pi(\mu)^mf_\NN(x)&=\int_K\int_0^\infty\int_K f_\NN(k'a_tkx)\dd m_K(k')\dd\eta^{(m)}(t)\dd m_K(k)\\
&=\int_K\int_0^\infty f_\NN(a_tkx)\dd\eta^{(m)}(t)\dd m_K(k)\\
&=\int_0^\infty \pi(\mu_t)f_\NN(x)\dd\eta^{(m)}(t)\\
&=\int_0^{t_0} \pi(\mu_t)f_\NN(x)\dd\eta^{(m)}(t)+\int_{t_0}^\infty \pi(\mu_t)f_\NN(x)\dd\eta^{(m)}(t)\\
&\le R\eta^{(m)}([0,t_0])f_\NN(x)+\frac{\alpha}{2}f_\NN(x)+\beta
\end{align*} 
for all $x\in \H$. 
Since as noted before, the term $R\smash{\eta^{(m)}}([0,t_0])$ tends to $0$ as $m\to\infty$, the right-hand side above is bounded by $\alpha f_\NN(x)+\beta$ for sufficiently large $m$, which is what we needed. 

The argument for merely $K$-right-invariant $\mu$ can be reduced to the case above. 
Indeed, if we set $\tilde{\mu}= m_K*\mu$, then what we have already established implies $\pi(\tilde{\mu})^mf_\NN\le \alpha f_\NN+\beta$ for all large $m$, and using again $K$-invariance of $f_\NN$ we see
\begin{align*}
\pi(\mu)^mf_\NN(x)=\int f_\NN(gx)\dd\mu^{*m}(g)=\int f_\NN(gx)\dd(m_K*\mu^{*m})(g)=\pi(\tilde{\mu})^mf_\NN(x)
\end{align*}
for all $x\in\H$, since $m_K*\mu^{*m}=\tilde{\mu}^{*m}$. 

Finally, let $\kappa$ be the constant from property~(iii') after Proposition~\ref{prop:lyapunov_pre} and choose $\delta\in(0,1)$ such that $\int\norm{g}^{\kappa\delta}\dd\mu(g)<\infty$, where we are using that $\mu$ has finite exponential moments. 
We define
\begin{align*}
V_\NN=\sum_{k=0}^{m-1}\alpha^{\frac{\delta(m-1-k)}{m}}\pi(\mu)^kf_\NN^\delta
\end{align*}
and claim that $V_\NN$ satisfies a contraction property with respect to the measure $\mu$. 
To see this, note first that $\pi(\mu)^mf_\NN^\delta\le (\alpha f_\NN+\beta)^\delta\le \alpha^\delta f_\NN^\delta+\beta^\delta$, in view of Jensen's inequality and the fact that $(s+t)^\delta\le s^\delta+t^\delta$ for non-negative real numbers $s,t$ and $\delta\in(0,1)$. 
Using this, one calculates
\begin{align*}
\pi(\mu)V_\NN &= \pi(\mu)^mf_\NN^\delta+\sum_{k=0}^{m-2}\alpha^{\frac{\delta(m-1-k)}{m}}\pi(\mu)^{k+1}f_\NN^\delta\\
&\le \alpha^{\delta/m}\biggl(\alpha^{\frac{\delta(m-1)}{m}}f_\NN^\delta+\sum_{k=0}^{m-2}\alpha^{\frac{\delta(m-1-(k+1))}{m}}\pi(\mu)^{k+1}f_\NN^\delta\biggr)+\beta^\delta\\
&=\alpha^{\delta/m}V_\NN+\beta^\delta,
\end{align*}
hence the claim. 
Moreover, using the first summand in the definition of $V_\NN$ as lower bound yields the inclusion $V_\NN^{-1}([0,M])\subset f_\NN^{-1}([0,\alpha^{(1-m)/m}M^{1/\delta}])$, so that the closure of the sublevel set $V_\NN^{-1}([0,M])$ is compact and contained in $\H\setminus\NN$ due to the corresponding property of $f_\NN$. 
It remains to argue that $V_\NN^{-1}(\set{\infty})=\NN$. 
The inclusion \enquote{$\supset$} is clear. 
To see that also the reverse inclusion holds, we use property~(iii') of the function $f_\NN$ to obtain that
\begin{align*}
\pi(\mu)^kf_\NN^\delta(x)\le \sigma^\delta\int\norm{g}^{\kappa\delta}\dd\mu^{*k}(g)f_\NN^\delta(x)\le\sigma^\delta\biggl(\int\norm{g}^{\kappa\delta}\dd\mu(g)\biggr)^kf_\NN^\delta(x)<\infty
\end{align*}
for $0\le k<m$ and any $x\in \H\setminus\NN$, by choice of $\delta$ and since $f_\NN^{-1}(\set{\infty})=\NN$. 
Hence, for $x\in\H\setminus\NN$ we also have $V_\NN(x)<\infty$. 
This finishes the proof. 
\end{proof}

\bibliographystyle{plain}
\bibliography{refs}

\end{document}